\documentclass{article}

\usepackage{amsmath, amsfonts, amssymb, amsthm}
\usepackage[inline]{enumitem}
\usepackage{tikz-cd}
\usepackage[left=3.7cm, right=3.7cm, top=3.7cm, bottom=4.2cm]{geometry}

\usepackage{xcolor}

\author{Terry Gannon, Robin Mader and Arturo Pianzola}
\date{\today}

\newtheorem{thm}{Theorem}
\newtheorem{lem}[thm]{Lemma}
\newtheorem{prop}[thm]{Proposition}
\newtheorem{cor}[thm]{Corollary}

\theoremstyle{definition}

\newtheorem{rem}[thm]{Remark}

\def\id{\ensuremath{\operatorname{id}}}

\def\Aut{\ensuremath{\operatorname{Aut}}}
\def\Spec{\ensuremath{\operatorname{Spec}}}

\def\BAut{\ensuremath{\operatorname{\mathbf{Aut}}}}

\def\Ralg{\ensuremath{R\text{-\textup{\textsf{alg}}}}}

\def\GL{\ensuremath{\operatorname{GL}}}
\def\BGL{\ensuremath{\mathbf{G}\mathbf{L}}}

\def\Mu{\ensuremath{\mathcal{M}}}

\title{Representability of the  automorphism group of finitely generated vertex algebras}

\begin{document}

\maketitle

\begin{abstract} We study the automorphism groups attached to a free algebra with multiple, possibly infinitely many, composition laws. As an application, we prove that the automorphism group of finitely generated vertex algebras over noetherian rings are affine group schemes.
\end{abstract}
\medskip

\noindent{\em Key words:} Vertex Algebra, Automorphism Group, Affine Group Scheme, Representability.

\noindent{\em MSC 2000:} Primary: 17B69; secondary: 14F20, 14L15.
\bigskip

\section{Introduction}Vertex algebras over commutative base rings other than $\mathbb C$ have been an important part of the theory since the beginning. One of the earliest examples is the monster $\mathbb Z[\frac{1}{2}]$-vertex algebra \cite{BR}. Real, integral, or modular vertex algebras have seen great attention since, see e.g.\ \cite{DongRen, Griess, McRae, Mason, Car, Car2, LiMuJiao}. 
For background material on vertex algebras over $\mathbb C$ we refer the reader to Kac' introductory book \cite{Kac}.

Let $V = \bigoplus_{n \in \mathbb Z} V_n$ be a $\mathbb Z$-graded vertex algebra over a commutative ring $R$, as defined in \cite[Def.\ 2.1]{Griess}, where we assume that the graded pieces $V_n$ are finitely generated as $R$-modules. In this note we show that if $R$ is noetherian, and  $V$ is finitely generated as a vertex algebra and locally free as an $R$-module, then the associated automorphism group functor $\BAut(V)$ is locally isomorphic to a closed subgroup of $\BGL_{N,R}$ for some suitable $N \in \Bbb{N}.$\footnote{\,Throughout ``locally" means locally for the Zariski topology.} In particular, $\BAut(V)$ is affine of finite type. 
 For the case when $R$ is a field, this result was obtained by Dong and Griess \cite{DongGr} by different methods. A simplification of their argument that applies to rings will be given in \cite{Car2}.\footnote{\, The author has informed us that many details remain to be checked.} It is worthwhile to point out that unlike \cite{DongGr} and \cite{Car2}, which use properties that are specific to vertex algebras, our method does not: It is a general result that applies to a large family of algebraic objects of which vertex algebras are an example.

\section{$\Mu$-algebras} Let $X$ be a set. Our proof is based on the construction of a certain free $R$-module $A(X)$ with (possibly infinitely many)  products on it with  no relations between them---as such, the methodology generalizes beyond vertex algebras; it will thus be presented. 

The nature of $A(X)$ is a straightforward generalization of that of the $R$-algebra of the free magma on $X$ \cite[Ch. III \S2.6]{Bbk}. We will outline its construction for the sake of completeness and readability. 

Let $\Mu$  be a nonempty set.  A set $M$ together with a collection of ``multiplications"
\[ \left\{ \begin{aligned} M \times M &\to M, \\ (a,b) &\mapsto a_m b \end{aligned} \right\}_{m \in \Mu} \]
is called an \emph{$\Mu$-magma}. A {\it homomorphism of $\Mu$-magmas} is a set map that preserves all $m$-products. 
First we show the existence of an $\Mu$-magma $M(X)$ containing $X$ as a subset, such that for any $\Mu$-magma $M$ and any set map $f \colon X \to M$, there exists a unique homomorphism $\widetilde f \colon M(X) \to M$ extending $f$. This is the {\it free $\Mu$-magma on $X.$}

We  construct $M(X)$ recursively. For any two sets $A$ and $B$, and any $m \in \Mu$, denote the set $A \times \{ m \} \times B$ by $A_m B$. 
For $k \in \mathbb Z_{> 0}$, define $M^k(X)$ via $M^1(X) = X$ and 
\[ M^k(X) = \coprod_{\substack{ 1 \leq j < k \\ m \in \Mu}} M^j(X)_m M^{k-j}(X). \]
Let $M(X) = \coprod_{k \geq 1} M^k(X)$.
For $m \in \Mu$, $a \in M^i(X)$ and $b \in M^j(X)$, we define $a_m b = (a,m,b) \in M^{i+j}(X)$. 
This makes $M(X)$ into an $\Mu$-magma. Given $f \colon X \to M$ as above and $a \in M(X)$, we  define $\widetilde f(a)$ as follows. Say $a \in M^k(X).$ If $k = 1,$ then $a \in M^1(X) =X$ and we set $\widetilde f(a) = f(a).$ Assume that $k > 1$ and that $\widetilde{f}$ has been defined for elements of $M^j(X)$ for $j <k.$ There exists a unique decomposition $a = (b,m,c) \in M^j(X)_m M^{k-j}(X)$, with $m \in \Mu$ and $1 \leq j < k$. We set $\widetilde f(a) = \widetilde f(b)_m \widetilde f(c)$.

Let now $R$ be a commutative ring. 
The notion of $R$-algebra generalizes in a natural way to the $\Mu$-setting.  An \emph{$\Mu$-algebra over $R$} is an $R$-module $A$ together with a collection of $R$-bilinear functions
\[ \left\{ \begin{aligned} A \times A &\to A, \\ (a,b) &\mapsto a_m b \end{aligned} \right\}_{m \in \Mu} \]
	called the {\it $m$-products}. A {\it homomorphism of $\Mu$-algebras} is an $R$-module map that preserves all $m$-products. Standard concepts for $\Mu$-algebras, such as  left, right, and two-sided ideals, are defined in the obvious way. One then has standard results, for example the $R$-module quotient of an $\Mu$-algebra by a two-sided ideal has a natural $\Mu$-algebra structure over $R$.
Vertex algebras over $R$ are quintessential examples of $\Mu$-algebras over $R$ for $\Mu = \mathbb Z$.

There exists an $\Mu$-algebra $A(X)$ over $R$ containing $X$ as a subset with the following universal property: for any $\Mu$-algebra $A$ and any set map $f \colon X \to A$, there exists a unique homomorphism $\widetilde f \colon A(X) \to A$ extending $f$.

One can construct $A(X)$ as the free $R$-module with basis $M(X)$ and $m$-products defined by $R$-bilinear extension of those on $M(X)$, that is, \[ \Bigg(\sum_{a \in M(X)} \alpha_a a\Bigg)_m \Bigg(\sum_{b \in M(X)} \beta_b b \Bigg) = \sum_{a, b \in M(X)} (\alpha_a \beta_b) a_m b\] for $\alpha_a, \beta_b \in R$ almost all equal to zero.
Now given any set map $f \colon X \to A$, we may view the target $\Mu$-algebra $A$ as an $\Mu$-magma to obtain a unique extension $\widetilde f\colon M(X) \to A$. This in turn extends to $A(X)$ because $A$ is an $R$-module, and the resulting map visibly preserves all $m$-products. We call $A(X)$ the {\it free $\Mu$-algebra over $R$ on $X.$}

The $R$-module $A(X)$ contains the free $R$-module $F(X) = \bigoplus_{x \in X} Rx$. For convenience in what follows we denote $F(X)$ simply by $F.$ Viewing an element $\theta \in \GL_R(F)$ as a set map $X \to A(X)$ induces an $\Mu$-algebra map $\widetilde \theta \colon A(X) \to A(X)$ with $\widetilde \theta |_{F} = \theta$.

\begin{lem}\label{Lemma1}
		The above map $ \theta \mapsto \widetilde \theta$ is an injective homomorphism of groups $ \GL_R(F) \to \Aut (A(X)).$ In fact, this map induces a group isomorphism between $\GL_R(F)$ and  \[ \Aut(A(X),F) = \{ \theta \in \Aut(A(X)) \mid \theta(F) = F\}.\] 	
\end{lem}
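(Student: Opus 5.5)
The plan is to derive everything from the uniqueness clause of the universal property of $A(X)$: an $\Mu$-algebra endomorphism of $A(X)$ is determined by its values on $X$, hence by its restriction to $F$.

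\emph{Homomorphism.} For $\theta,\sigma\in\GL_R(F)$, both $\widetilde\theta\circ\widetilde\sigma$ and $\widetilde{\theta\sigma}$ are $\Mu$-algebra endomorphisms of $A(X)$. On $x\in X$ they agree, since $\widetilde\sigma(x)=\sigma(x)\in F$ and $\widetilde\theta|_F=\theta$. Uniqueness then gives $\widetilde{\theta\sigma}=\widetilde\theta\,\widetilde\sigma$. In the same way, $\widetilde{\id_F}$ and $\id_{A(X)}$ agree on $X$, so $\widetilde{\id_F}=\id_{A(X)}$. It follows that $\widetilde{\theta^{-1}}$ is a two-sided inverse of $\widetilde\theta$. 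Hence each $\widetilde\theta$ lies in $\Aut(A(X))$, and $\theta\mapsto\widetilde\theta$ is a group homomorphism.

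\emph{Injectivity.} This is immediate, because $\widetilde\theta|_F=\theta$ lets us recover $\theta$ from $\widetilde\theta$.

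\emph{Image equals $\Aut(A(X),F)$.} Since $\widetilde\theta(F)=\theta(F)=F$, the image is contained in $\Aut(A(X),F)$. Conversely, let $\phi\in\Aut(A(X))$ with $\phi(F)=F$, and set $\theta=\phi|_F$. Then $\theta$ is an $R$-linear map $F\to F$ and it is surjective. It is also injective, being the restriction of the injective map $\phi$. So $\theta\in\GL_R(F)$; note that we do not need $\phi^{-1}(F)=F$ separately. Now $\phi$ and $\widetilde\theta$ are $\Mu$-algebra maps that agree on $X$, so uniqueness gives $\phi=\widetilde\theta$.

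The only point that needs care is the meaning of ``$\theta(F)=F$'': it must be read as equality of sets, not mere inclusion. With inclusion only, $\phi|_F$ could fail to be surjective onto $F$. Apart from this, the argument is purely formal and I expect no real obstacle.
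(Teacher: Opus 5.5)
Your proof is correct and follows the paper's route: everything comes from the uniqueness clause in the universal property of $A(X)$. You also spell out the injectivity and the identification of the image with $\Aut(A(X),F)$, which the paper leaves implicit.

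Your closing caveat is more cautious than it needs to be. Suppose $\phi\in\Aut(A(X))$ only satisfies $\phi(F)\subseteq F$. Then by induction on length, $\phi$ maps the span of each $M^k(X)$ into itself. Since these spans decompose $A(X)$ as a direct sum, bijectivity of $\phi$ already forces $\phi(F)=F$.
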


\begin{proof}
	Let $\sigma, \theta \in \GL_R(F)$. By definition, $(\widetilde \sigma \circ \widetilde \theta)|_X = \sigma \circ \theta |_X$, but there is only one $\Mu$-algebra map extending $\sigma \circ \theta$, namely $\widetilde{\sigma \circ \theta}$.
	Since $\widetilde{\id} = \id$,  the lemma follows. 
\end{proof}

 
\begin{prop}\label{prop2} Let $A$ be an $\Mu$-algebra over $R$ which contains a copy of $F$ as an $R$-submodule. Let $\Aut(A,F) = \{ \theta \in \Aut(A) \mid \theta(F) = F\}.$ If  $F$ generates $A,$ then:

\begin{enumerate}[label = (\roman*)]
	\item \label{p2i} The map 
\begin{equation}\label{tildemap} \Aut(A,F) \to \Aut(A(X), F), \quad \theta \mapsto \widetilde \theta = \widetilde{\theta|_F}. \end{equation}
is a group monomorphism.

\item \label{p2ii}  There exists a unique surjective $\Mu$-algebra homomorphism $\eta\colon A(X) \to A$ that is the identity on $X.$ 

\item \label{p2iii} Let $I$ denote the kernel of $\eta$. The image of the map in \eqref{tildemap} consists of those $\varphi \in \Aut(A(X),F)$ with $\varphi(I) = I$.
\end{enumerate}
\end{prop}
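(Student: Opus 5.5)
I would prove (ii) first, since (i) and (iii) are organized around $\eta$. The map $\eta$ is the unique $\Mu$-algebra homomorphism $\widetilde{\iota}\colon A(X)\to A$ extending the inclusion $\iota\colon X\hookrightarrow F\subset A$; existence and uniqueness come from the universal property of $A(X)$. For surjectivity, note that the image of $\eta$ is an $\Mu$-subalgebra of $A$ (images of homomorphisms are closed under $R$-linear combinations and all $m$-products). It contains $X$, hence the $R$-span $F$. Since $F$ generates $A$, meaning the smallest $\Mu$-subalgebra containing $F$ is $A$, we get $\eta(A(X))=A$.

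For (i) and (iii), the key identity is the intertwining relation
\[ \eta\circ\widetilde{\theta} = \theta\circ\eta \quad\text{for } \theta\in\Aut(A,F). \]
Both sides are $\Mu$-algebra maps $A(X)\to A$ that agree on $X$, since $\widetilde\theta|_X=\theta|_X\in F$ and $\eta$ is the identity on $F$. By uniqueness in the universal property they coincide. Because $\theta|_F\in\GL_R(F)$, Lemma \ref{Lemma1} shows that $\widetilde\theta\in\Aut(A(X),F)$ and that $\theta\mapsto\widetilde{\theta|_F}$ is multiplicative: restriction to $F$ is a group homomorphism $\Aut(A,F)\to\GL_R(F)$, and this is composed with the homomorphism of Lemma \ref{Lemma1}. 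For injectivity, suppose $\widetilde\theta=\id$. Then $\theta|_F=\id$, and since $F$ generates $A$, the set $\{a:\theta(a)=a\}$ is a subalgebra containing $F$, so $\theta=\id$. Alternatively, the intertwining relation gives $\theta\circ\eta=\eta$, and surjectivity of $\eta$ yields $\theta=\id$.

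For (iii), first take $\varphi=\widetilde\theta$ in the image. The intertwining relation gives $\varphi(I)\subseteq I$. Applying the same relation to $\theta^{-1}$, whose image is $\varphi^{-1}$ because the map is a homomorphism, gives $\varphi^{-1}(I)\subseteq I$. Hence $\varphi(I)=I$.

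Conversely, let $\varphi\in\Aut(A(X),F)$ with $\varphi(I)=I$. Since $\eta$ is surjective, $A\cong A(X)/I$. Because $\varphi(I)\subseteq I$, the map $\varphi$ descends to an $\Mu$-algebra endomorphism $\theta$ of $A$ with $\theta\circ\eta=\eta\circ\varphi$. Likewise $\varphi^{-1}(I)\subseteq I$ descends to an inverse, so $\theta\in\Aut(A)$. Moreover $\theta(F)=\theta(\eta(F))=\eta(\varphi(F))=\eta(F)=F$, using that $\eta|_F=\id_F$. Thus $\theta\in\Aut(A,F)$. Finally $\theta|_F=\varphi|_F$, so $\widetilde\theta=\varphi$ by the uniqueness in Lemma \ref{Lemma1}.

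The only point requiring care is this identification of $\theta|_F$ with $\varphi|_F$. It relies on $\eta$ restricting to the identity on $F$, which holds because $F\subset A(X)$ is mapped $R$-linearly and identically onto the copy of $F$ in $A$. The remaining steps are routine applications of universal properties.
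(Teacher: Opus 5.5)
Your proposal is correct and follows essentially the same route as the paper. Both rest on the intertwining identity $\eta\circ\widetilde\theta=\theta\circ\eta$, obtained from uniqueness in the universal property, and then descend $\varphi$ to $A\simeq A(X)/I$, using $\eta|_F=\id_F$ to conclude $\varphi=\widetilde\theta$. You supply details the paper calls ``clear'': surjectivity of $\eta$ (its image is a subalgebra containing $F$), the homomorphism property (restriction to $F$ composed with Lemma~\ref{Lemma1}), and injectivity via the fixed-point subalgebra.
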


\begin{proof}
	\ref{p2i} The map is clearly a group homomorphism. Since $F$ generates $A$ we can appeal to Lemma \ref{Lemma1} to conclude that it is injective.

	\ref{p2ii} By assumption $A$ has a copy of $F = \bigoplus_{x \in X} Rx.$ We can thus view the elements of $X$ as elements of $A$. Now (ii) is clear by the universal property of $A(X)$ and the assumption that $F$ generates $A.$

	\ref{p2iii} For $\theta \in \Aut(A,F)$ we have $\eta \circ \widetilde \theta = \theta \circ \eta$, because both sides of the equation are $\Mu$-algebra homomorphisms with the same restriction to $X.$
	Thus $\eta \widetilde \theta (I) = \theta \eta (I) = 0$, or equivalently, $\widetilde \theta(I) \subset I$. The same reasoning applies to $\theta^{-1}$ and so $\widetilde \theta(I) = I$.
	Conversely, let $\varphi \in \Aut(A(X),F)$ with $\varphi(I) = I$. 
	Since $\eta$ is surjective,  
	$\varphi$ induces an automorphism $\theta$ on the quotient $A \simeq A(X)/I$ via $\theta \eta = \eta \varphi$.  Since $\eta(F) = F$, we get $\theta(F) = \eta (\varphi(F)) = F$, so that $\theta \in \Aut(A,F)$.
	From $\eta \varphi = \eta \widetilde \theta$ we deduce $\varphi|_F = \widetilde \theta|_F$, whence $\varphi = \widetilde \theta$.
\end{proof}

\section{Automorphisms of $\Mu$-algebras and representability considerations}

If $S$ is a ring extension of $R,$ that is an object of $\Ralg,$ then the $S$-module $A \otimes_RS$ has a natural $\Mu$-algebra structure over $S,$ said to be obtained from that of $A$ by base change. We thus have an $R$-group functor  $\BAut(A)$ that attaches to $S$ in $\Ralg$ the group of automorphisms of $A\otimes_R S$ which respect its structure of $\Mu$-algebra over $S$.
At the level of arrows, if $f\colon S \to S'$ is an $R$-algebra map, we define $\BAut(A)(f)(g)$ via the diagram
\[ \begin{tikzcd}[column sep = 5em] A \otimes_R S \otimes_S S' \ar[r, "g \otimes \id_{S'}"] \ar[d, "\simeq"] & A\otimes_R S \otimes_S S' \ar[d, "\simeq"] \\ A \otimes_R S' \ar[r, "{\BAut(A)(f)(g)}"] & A \otimes_R S' \end{tikzcd} \]
for all $g \in \BAut(A)(S)$. 

We define a subgroup $\BAut(A,F)$ of $\BAut(A)$ by setting $\BAut(A,F)(S)$ to be the subgroup of  $\BAut(A)(S)$ consisting of elements that  preserve the image of $F \otimes_R S$ in $A \otimes_R S.$ 
\begin{prop} \label{sheaf} The functors $\BAut(A)$ and $\BAut(A,F)$ are group sheaves on the flat site $R_{\rm flat}$ of $R.$ {\rm See  \cite[III \S5 1.1]{DG}.}
\end{prop}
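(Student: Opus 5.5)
We prove that both functors are sheaves for the fpqc (flat) topology by reducing to faithfully flat descent for modules and for module homomorphisms. The presheaf structure is already given by the base change diagram. So we must check two things for every faithfully flat $R$-algebra map $S \to S'$ (and, for the Zariski part, every finite product decomposition, which is immediate since $A\otimes_R\prod S_i=\prod A\otimes_R S_i$ and automorphisms decompose componentwise). The first is that $\BAut(A)(S) \to \BAut(A)(S')$ is injective. The second is that an element $g' \in \BAut(A)(S')$ whose two images in $\BAut(A)(S'\otimes_S S')$ agree comes from a unique $g\in\BAut(A)(S)$. Replacing $R$ by $S$ and $A$ by $A\otimes_R S$, we may assume $S=R$.

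The key tool is the standard descent statement: for $R$-modules $M,N$ and a faithfully flat $R\to S'$, the sequence
\[ \operatorname{Hom}_R(M,N) \to \operatorname{Hom}_{S'}(M\otimes_R S', N\otimes_R S') \rightrightarrows \operatorname{Hom}_{S'\otimes_R S'}(M\otimes_R S'\otimes_R S', N\otimes_R S'\otimes_R S') \]
is an equalizer. This follows from exactness of the Amitsur complex $0\to N\to N\otimes S'\rightrightarrows N\otimes S'\otimes S'$ after applying $\operatorname{Hom}_R(M,-)$ and adjunction; we cite \cite{DG}.

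We apply this first with $M=N=A$. Injectivity of $\BAut(A)(R)\to\BAut(A)(S')$ is then immediate. For gluing, the $R$-linear descent $g$ of $g'$ exists, and the descent $h$ of $g'^{-1}$ also exists, since $g'^{-1}$ is compatible with the gluing as well. Moreover $gh$ and $hg$ descend $\id$, so they equal the identity by injectivity, and hence $g\in\GL_R(A)$.

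It remains to show that $g$ preserves each $m$-product. Consider the two $R$-linear maps $A\otimes_R A\to A$ given by $a\otimes b\mapsto g(a_m b)$ and $a\otimes b\mapsto g(a)_m g(b)$. After base change to $S'$ they become the corresponding maps for $g'$, using the identification $(A\otimes A)\otimes S'=(A\otimes S')\otimes_{S'}(A\otimes S')$. These coincide because $g'$ is an $\Mu$-algebra automorphism. By injectivity in the descent statement, now applied with $M=A\otimes_R A$ and $N=A$, the two maps agree. So $g\in\BAut(A)(R)$, and $\BAut(A)$ is a sheaf.

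For $\BAut(A,F)$, it suffices to show that it is a subsheaf cut out by a local condition: if $g\in\BAut(A)(R)$ and $g\otimes S'$ preserves the image $F_{S'}$ of $F\otimes S'$, then $g$ preserves the image $\bar F$ of $F$ (for $g^{-1}$ as well). Let $Q=A/\bar F$. Since $S'$ is flat, $(A/\bar F)\otimes S' = A_{S'}/F_{S'}$. The composite $F\to A\xrightarrow{g}A\to Q$ becomes zero after tensoring with $S'$, so it is zero by faithful flatness. Hence $g(\bar F)\subset\bar F$, and likewise $g^{-1}(\bar F)\subset\bar F$, giving equality.

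The main obstacle is bookkeeping rather than conceptual. One must correctly identify the images of $F\otimes_R S$, which may fail to be injective, and check that flatness commutes with forming these images and quotients; with the quotient description above, this is fine.
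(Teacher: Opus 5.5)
Your proof is correct and takes essentially the same route as the paper: you obtain the descended map as a linear automorphism, then use faithful flatness (injectivity of $A\to A\otimes_R S'$) to show that it preserves the $m$-products and the image of $F$. The only differences are cosmetic. You rederive the sheaf property of $\BGL(A)$ from descent of module homomorphisms, descending $g'^{-1}$ to get invertibility, where the paper cites Demazure--Gabriel. You also check the $m$-products through the two maps $A\otimes_R A\to A$ rather than elementwise.
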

\begin{proof} 
	
	It is clear that both functors respect finite products. Let $S \to S'$ be a faithfully flat extension in $\Ralg$.
	We verify that the upper row of the diagram
	\[
		\begin{tikzcd}
			\BAut(A,F)(S) \ar[r] \ar[d, phantom, "\cap"] & \BAut(A,F)(S')  \ar[r,shift left=.75ex]
			\ar[r,shift right=.75ex,swap] \ar[d, phantom, "\cap"] & \BAut(A,F)(S'\otimes_S S') \ar[d, phantom, "\cap"] \\
			\BGL(A)(S) \ar[r] & \BGL(A)(S')  \ar[r,shift left=.75ex]
			\ar[r,shift right=.75ex,swap] & \BGL(A)(S' \otimes_S S')
		\end{tikzcd}
	\]
	is exact. 
	The lower row is exact by \cite[III \S5 1.10]{DG}, so we need only verify that the unique lift $\theta \in \BGL(A)(S)$ of an element $\theta' \in \BAut(A,F)(S')$ in the difference kernel is itself an element of $\BAut(A,F)(S)$. 
	For convenience, denote $A_S = A \otimes_R S$ and denote the image of $F \otimes_R S$ by $F_S \subset A_S$. 
	Under the identification $A \otimes_R S' \simeq A_S \otimes_S S'$ the map $\theta'$ becomes $\theta \otimes \id_{S'}$ and the image of $F \otimes_R S'$ is identified with $F_S \otimes_S S'$.

	Note that the sequence $F_S \xrightarrow{\theta} A_S \to A_S/ F_S$ is exact since it becomes exact after base change to $S'$ by assumption on $\theta'.$
	Thus $\theta$ preserves $F_S$.
Finally, to show that $\theta$ is an $\Mu$-algebra homomorphism we reason as follows. For $a, b \in A_S$ and $m \in \Mu$, we have 
	\[ \theta(a_m b) \otimes 1 = \theta'(a_m b \otimes 1) =  \theta'( (a \otimes 1)_m (b \otimes 1) ) = \theta'(a \otimes 1)_m \theta'(b \otimes 1) = (\theta(a)_m \theta(b)) \otimes 1, \] 
	because $\theta'$ is a homomorphism. Since  $A_S \to A_S \otimes_S S'$ is an injective map of $S$-modules, it follows that $\theta(a_m b)= \theta(a)_m \theta(b)$.\end{proof}

\begin{lem} \label{lem4}
\begin{enumerate}[label = (\roman*)]
	\item \label{l4i} 
There is natural  $R$-group isomorphism $\BGL_R(F) \simeq \BAut(A(X),F).$
\item \label{l4ii}  Assume that $F$ is such that the canonical map $F \otimes_R S \to A \otimes_R S$ is injective for all $S$ in $\Ralg$ (for example, if $F$ is locally a direct summand of $A$). The construction of Proposition \ref{prop2}\ref{p2i} yields an injective $R$-group morphism $\BAut(A,F) \to \BAut(A(X),F) \simeq \BGL_{R}(F).$
\end{enumerate}
\end{lem}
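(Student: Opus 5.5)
For \ref{l4i} the plan is to apply Lemma \ref{Lemma1} over every base $S$ in $\Ralg$ and then check naturality in $S$. The key point is that the free construction commutes with base change. Since $A(X)$ is the free $R$-module on $M(X)$, the module $A(X)\otimes_R S$ is the free $S$-module on $M(X)$, and its $m$-products are the $S$-bilinear extensions of those on $M(X)$. Hence $A(X)\otimes_R S$ is the free $\Mu$-algebra over $S$ on $X$, which we write $A_S(X)$. In the same way $F\otimes_R S=\bigoplus_{x\in X} Sx$ sits inside it as a direct summand. Lemma \ref{Lemma1} over $S$ then gives a group isomorphism
\[ \GL_S(F\otimes_R S)\to \Aut(A_S(X),F\otimes_R S),\qquad \theta\mapsto\widetilde\theta. \]

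Next I verify that these isomorphisms are natural. Let $f\colon S\to S'$ be an $R$-algebra map and $\theta\in\GL_S(F_S)$. Both $\widetilde\theta\otimes\id_{S'}$ and $\widetilde{\theta\otimes\id_{S'}}$ are $\Mu$-algebra endomorphisms of $A_{S'}(X)$ that agree on $X$. By the uniqueness part of the universal property over $S'$ they are equal. This makes the family of isomorphisms into an isomorphism of $R$-group functors $\BGL_R(F)\simeq\BAut(A(X),F)$.

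For \ref{l4ii} I first have to make sense of the construction. The injectivity hypothesis on $F\otimes_R S\to A\otimes_R S$ lets us regard $F_S=F\otimes_R S$ as an $S$-submodule of $A_S$. I will also use that $F_S$ generates $A_S$ as an $\Mu$-algebra over $S$. This holds because $A$ is spanned over $R$ by iterated products of elements of $F$, and after base change $A_S$ is spanned over $S$ by the images of these products. Proposition \ref{prop2}\ref{p2i}, applied over $S$, then gives a group monomorphism
\[ \BAut(A,F)(S)\to\Aut(A_S(X),F_S),\qquad \theta\mapsto\widetilde{\theta|_{F_S}}. \]
Injectivity here is immediate: if $\widetilde{\theta|_{F_S}}=\id$, then $\theta|_{F_S}=\id$, and since $F_S$ generates $A_S$ we get $\theta=\id$.

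It remains to show naturality in $S$. For $\theta\in\BAut(A,F)(S)$, the element $\theta\otimes\id_{S'}$ restricts on $F_{S'}=F_S\otimes_S S'$ to $\theta|_{F_S}\otimes\id_{S'}$. This uses the hypothesis that $F_{S'}$ embeds in $A_{S'}$ compatibly with base change. The compatibility from part \ref{l4i} then shows that the square with $\BAut(A(X),F)$ commutes.

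The main obstacle is the identification $A(X)\otimes_R S\cong A_S(X)$ together with the bookkeeping of the embeddings $F_S\subset A_S$ under base change. This is exactly where the injectivity hypothesis is needed, since without it the restriction $\theta|_{F_S}$ is not well defined as an automorphism of $F_S$. I expect every other step to reduce formally to Lemma \ref{Lemma1}, Proposition \ref{prop2}, and uniqueness in universal properties.
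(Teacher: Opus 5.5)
Your proposal is correct and follows the paper's proof. For (i) you identify $A(X)\otimes_R S$ with the free $\Mu$-algebra over $S$ on $X$ and apply Lemma~\ref{Lemma1} over each $S$; for (ii) you apply Proposition~\ref{prop2}\ref{p2i} argument-by-argument, and you also spell out two points the paper leaves implicit: naturality in $S$ via uniqueness in the universal property, and the fact that $F_S$ still generates $A_S$ after base change.
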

\begin{proof} \ref{l4i} It is clear that for $S$ in $\Ralg,$ $A(X)\otimes_RS$ is canonically isomorphic to the free $\Mu$-algebra over $S$ on $X.$ Moreover, since the elements of $X$ are part of an $R$-basis of $A(X)$, we can  identify $F \otimes_R S$ with its image in $A(X) \otimes_R S.$ Part \ref{l4i} of the lemma now follows from Lemma \ref{Lemma1}. 

	\ref{l4ii} If $F \otimes_R S$ can be identified with its image in $A \otimes_R S,$ \ref{l4ii} follows by applying Proposition \ref{prop2}\ref{p2i} ``argument-by-argument".
\end{proof}

We are now ready to address the representability of $\BAut(A,F).$ 

\begin{thm}\label{autrep}
	Let the notation be as in Proposition \ref{prop2}. Assume that the following conditions hold:
	\begin{enumerate}[label =(\alph*)]
		\item $X = \{x^1, \dots, x^N\}$ is a finite set. 
	
		\item  $A$ is a locally free $R$-module and the canonical map $F \otimes_R S \to A \otimes_R S$ is injective for all $S$ in $\Ralg.$\footnote{\, As already mentioned, $F$ has the desired property whenever it is locally a direct summand of $A.$ In most applications $A$ is free and the elements of $X$  part of a basis of $A.$}
		
		\item  $R$ is noetherian.
	\end{enumerate}
Then  $\BAut(A,F)$ is an affine $R$-group scheme of finite type that is locally isomorphic to a closed subgroup of $\BGL_N$. \end{thm}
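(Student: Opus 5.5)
By Lemma \ref{lem4}\ref{l4ii}, $\BAut(A,F)$ embeds into $\BAut(A(X),F)\simeq \BGL_R(F)=\BGL_{N,R}$. The plan is to show that this image is the closed subgroup of $\BGL_N$ cut out by the condition $\varphi(I_S)=I_S$, where $I=\ker\eta$ as in Proposition \ref{prop2}\ref{p2iii}. Since $F$ is free with basis $X$, the word ``locally'' should only be needed for the locally freeness of $A$. First I would check that the description of Proposition \ref{prop2}\ref{p2iii} survives base change. For $S$ in $\Ralg$, the map $\eta_S\colon A(X)\otimes_R S\to A\otimes_R S$ is surjective and has kernel $I_S$. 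Because $A$ is locally free, hence flat, the sequence $0\to I\to A(X)\to A\to 0$ stays exact after $\otimes_R S$, so $I_S = I\otimes_R S$. Combining this with assumption (b) and applying Proposition \ref{prop2} over $S$, we get that $\BAut(A,F)(S)$ is identified with the set of $g\in \GL_N(S)$ such that $\widetilde g(I_S)\subseteq I_S$. The reverse inclusion should come for free by applying the same condition to $g^{-1}$, as in the proof of Proposition \ref{prop2}.

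Next I would make the condition explicit. Grade $A(X)=\bigoplus_k A^k$, where $A^k$ is spanned by $M^k(X)$. Each $\widetilde g$ preserves this grading, but $I$ need not be homogeneous, so a finer invariant is required. I would use the multidegree $\mathbb Z_{\ge0}^N$ together with the tree shape (the sequence of $m$'s and bracketings). On a fixed tree shape $t$ of length $k$, $\widetilde g$ acts as $g^{\otimes k}$ on the span $T_t\simeq F^{\otimes k}$ of the words of shape $t$. The coefficients of this action are polynomials in the entries of $g$ with coefficients in $R$. Since $A$ is locally free, after passing to a Zariski cover by $\Spec R_f$ we may assume $A$ is free. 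Then $\widetilde g(I_S)\subseteq I_S$ is equivalent to $\eta_S\circ\widetilde g$ vanishing on $I$. It therefore suffices to require that $\eta_S(\widetilde g(v))=0$ for $v$ ranging over a set of $R$-module generators of $I$.

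For each generator $v$, the expression $\eta_S(\widetilde g(v))$ is a finite sum of terms of the form (a polynomial in $g$ with coefficients in $R$) times (an element of $A$). Writing each such element of $A$ in a basis of $A$ produces polynomial equations in the entries of $g$ and $\det(g)^{-1}$. These equations define a closed subscheme of $\BGL_N$ that represents $\BAut(A,F)$ over $R_f$, and this subscheme is functorial in $S$. We get infinitely many equations, but because $R[\GL_N]$ is noetherian (this is where hypothesis (c) enters), the ideal they generate is finitely generated. The resulting closed subscheme is therefore affine of finite type.

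Finally, I would glue. On each member of the Zariski cover, $\BAut(A,F)$ is representable by a closed subgroup of $\BGL_N$, and the representing schemes are compatible on overlaps because they represent the same functor. Proposition \ref{sheaf} gives that $\BAut(A,F)$ is a sheaf, so it is represented by a closed subscheme of $\BGL_{N,R}$ glued from the local pieces. Being closed in an affine scheme, it is affine, and it is of finite type since $R$ is noetherian. I expect the main obstacle to be the base-change step: checking that $\ker(\eta_S)=I\otimes_R S$ and that Proposition \ref{prop2} applies verbatim over $S$, including injectivity of $F_S\to A_S$. Both rely on flatness of $A$ and on hypothesis (b).
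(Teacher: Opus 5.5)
Your argument is correct, but at the key step it takes a different route from the paper. The paper first proves that $I=\ker\eta$ is locally free: projectivity is Zariski-local, and Bass's theorem says that non-finitely generated projectives over a connected noetherian ring are free. It then reduces to $A$ and $I$ free and splits $A(X)=I\oplus A'$. Finally it imposes the vanishing of the $A'$-components of $\widetilde\theta(w_h)$ and $\widetilde{\theta^{-1}}(w_h)$ for basis vectors $w_h$ of $I$.

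You never need a basis of $I$. You test $\eta_S\circ\widetilde g$ on an arbitrary $R$-generating set of $I$ and read off coordinates in a basis of $A$. This only requires that $\ker\eta_S$ be the $S$-span of $I\otimes 1$, which right exactness of $-\otimes_R S$ already gives; flatness of $A$ is more than you need.

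What your route buys is that Bass's theorem, and with it hypothesis (c), drops out entirely. The use of (c) you name is not needed: every closed subscheme of $\BGL_{N,R}$ is of finite type over $R$ whether or not $R$ is noetherian, and noetherianity only upgrades this to finite presentation. So your argument shows that the local freeness of $I$ can be bypassed, even though the paper's Remark says the whole argument rests on it. Moreover, since $F$ is globally free on $X$, your gluing yields a closed subgroup of $\BGL_{N,R}$ itself, not merely locally.

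Two small points:
\begin{enumerate}
\item $\widetilde g$ preserves tree shapes but not multidegrees. You only use the former, via $\widetilde g|_{T_t}=g^{\otimes k}$, so this does not affect the argument.
\item The inclusion $\widetilde g^{-1}(I_S)\subseteq I_S$ can be imposed as extra equations in $g^{-1}$, as the paper does. Alternatively it is automatic: apply Cayley--Hamilton to $\widetilde g$ on a finite sum of the finite free pieces $T_t\otimes_R S$ containing a given $v\in I_S$, which expresses $\widetilde g^{-1}(v)$ as an $S$-combination of the $\widetilde g^{j}(v)$.
\end{enumerate}
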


\begin{proof} Consider the  exact sequence of $R$-modules ({\it cf.} Proposition \ref{prop2})
\[ \begin{tikzcd} 0 \ar[r] & I \ar[r, "\subset"] & A(X) \ar[r, "\eta"] & A \ar[r] & 0 \end{tikzcd} \]
Let us first show that $I$ is locally free. If $A$ is free, $I$ is projective. Since $A$ is locally free, $I$ is locally projective, hence projective \cite[Theorem 10.95.6]{SP}. Consider an open cover  of $\Spec(R)$ by connected affines $\Spec(R_i).$  If the projective $R_i$-module $I \otimes_R R_i$ is of finite type, it is locally free. If it is not of finite type, it is free by a result of Bass \cite{Bass}. It follows that $I$ is locally free as claimed.

By Proposition \ref{sheaf}, it will suffice to show that there exists a cover of $\Spec(R)$ by standard affine open subsets $\Spec(R_i)$ such that the restriction of $\BAut(A, F)$ to $\Spec(R_i)$, namely $\BAut(A \otimes_R R_i, F \otimes_R R_i)$, is isomorphic to a closed subgroup of $\BGL_{N,R_i}$ for some $N.$ Note that since $I$ is a locally free $R$-module, it is still locally free after base change to $R_i$. In view of this,  the assumption on  $A$  allows us to reduce to the case when $A$ and $I$ are free $R$-modules.  Assume henceforth this to be the case.   

 Let $A' = \bigoplus_{j \in J_A} Rw_j \subset A(X)$ be a section of $\eta,$ namely $\{\eta(w_j)\}_{j \in J_A}$ is a basis of $A.$ Let $\{w_j\}_{j \in J_I}$ be a basis of $I.$ If $S$ is a ring extension of $R$, then $A(X) \otimes_R S$ is isomorphic to the free $\Mu$-algebra on $X$ over $S$.
 By Lemma \ref{lem4} the map $\BAut(A,F) \to \BAut(A(X),F) \simeq \BGL_{N,R}$  allows us to identify $\BAut(A,F)$ with a subgroup functor of $ \BGL_{N,R}.$ The $S$-points of this subgroup consists of $N\! \times \!N$-matrices whose associated maps $A(X) \otimes_R S \to A(X) \otimes_R S$ preserve the kernel of $\eta_S,$ which is precisely $I \otimes_R S$. 
We show that this is a polynomial condition on the coordinate ring of $\BGL_{N,R}.$

We have $A(X) = I \oplus A' = \bigoplus_{j \in J_I \coprod J_A} R w_j$. 
Let $\theta = (\theta_{ij})_{1 \leq i, j \leq N} \in \GL_N(S)$. Then, there exist polynomials $p_{\ell h} \in R[X_{ij} \mid 1 \leq i,j \leq N]$, for $\ell, h \in J_I \coprod J_A$, such that $\widetilde \theta (w_h) = \sum_{\ell} p_{\ell h} ((\theta_{ij})) w_\ell$. 

Indeed, after a change of $R$-basis it is enough to verify that there exist polynomials $q_{b,a} \in R[X_{ij}]$ for $a, b \in M(X)$ such that $\widetilde \theta(a) = \sum_{b \in M(X)} q_{b,a}((\theta_{ij})) b$, and this we can do inductively on $a$.
If $a \in M^1(X) = X$, then $a = x^j$ and $\widetilde \theta(a) = \sum_{i = 1}^N \theta_{ij} x^i$ defines the polynomials $q_{b,a}.$ Explicitly,  $q_{x^i, x^j} = X_{ij}$ and $q_{b,x^j} = 0$ for $b \notin X.$
Suppose $k \geq 2$ with $q_{c,b}$ already defined for any $b \in M^j(X)$ with $1 \leq j < k$ and $c \in M(X)$. Decompose $a \in M^k(X)$ as $a = (b,m,c) = b_m c$ to find
\begin{align*} \widetilde \theta(a) = \widetilde \theta(b) _m \widetilde \theta (c) &= \Bigg( \sum_{b'} q_{b',b}((\theta_{ij})) b' \Bigg)_m \Bigg( \sum_{c'} q_{c',c}((\theta_{ij})) c' \Bigg) \\ &= \sum_{b', c' \in M(X)} q_{b',b}((\theta_{ij})) q_{c',c}((\theta_{ij})) b'_m c' .\end{align*}
Thus $q_{da} = q_{b'b} q_{c'c}$ for any $d$ whose unique decomposition into smaller words is of the form $d = b' _m c'$, and $q_{da} = 0$ whenever the decomposition is of the form $d = b'_{m'} c'$ with $m' \neq m$ or when $d \in M^1(X) = X$.

	This shows that the polynomials $p_{\ell h}$ exist, and that the conditions \[ p_{\ell h} ((\theta_{ij})) = 0 = p_{\ell h} ((\theta_{ij})^{-1}) \] for $h \in J_I$ and $\ell \in J_A$ cut out the subgroup  $\BAut(A,F)$.	 \end{proof}
	
	\begin{rem} The whole argument rests on knowing that $I$ is locally free. Though the assumption that $R$ be noetherian is sufficient, it is not necessary. Other examples are local rings \cite{K}, or any ring for which projective modules are locally free.
	\end{rem}
	
	\begin{rem} One can formulate the same result with a scheme $(X, \mathcal{O}_X)$ as base. It is clear how to define the concept of an $\mathcal{O}_X$-sheaf $\mathcal{A}$ of $\Mu$-algebras.  Let $\mathcal{F}$ be a coherent $\mathcal{O}_X$-submodule of $\mathcal{A}$ that generates $\mathcal{A}$. Assume that $\mathcal{A}$ is locally free (hence quasi-coherent) and that $\mathcal{F}$ is locally a direct summand of $\mathcal{A}.$ If $X$ is locally noetherian, then the group-sheaf $\BAut(\mathcal{A})$ is a group scheme that is affine of finite type over $X,$ and locally isomorphic to a closed subgroup of $\BGL_{N}$ for some $N.$ The proof is by reduction to the affine case. Cover $X$ by open noetherian affine schemes. If  $U = \Spec(R)$ is one of the elements of such covering, then over $U$ our $\mathcal{A}$ corresponds to a locally free $R$-module $A$ that carries the structure of an $\Mu$-algebra over $R,$ and $\mathcal{F}$ to a  submodule $F$ of $A$ that is of finite type, locally a direct summand of $A,$ and generates $A$ . By Theorem \ref{autrep} the restriction of $\BAut (\mathcal{A})$ to $U$ has all the desired properties.
 \end{rem}
	\section{The graded case and applications to vertex algebras}

Let $A$ be an $\Mu$-algebra over $R.$ We will assume that $A$ is graded by an abelian group $\Lambda.$ Thus $A = \bigoplus_{\lambda \in \Lambda} A_\lambda$ where the $A_\lambda$ are $R$-submodules, and for all $a \in A_\alpha, b \in A_\beta$ and $ m \in \Mu,$ we have $a _m b \in A_\lambda$ for some $\lambda \in \Lambda$ depending on $\alpha, \beta$ and $m.$ The grading thus  defines a function $d \colon \Lambda \times \Mu \times \Lambda \to \Lambda$ so that $\lambda$ above is given by $d(\alpha, m, \beta).$ 
\begin{lem}\label{graded} Assume that $A = \bigoplus_{\lambda \in \Lambda} A_\lambda$ is finitely generated, and that each of the $A_\lambda$ is free of finite rank. Then there exists a finite set $X$ with the following properties:

(i) $X$ leads to an $\Mu$-algebra homomorphism  $\eta \colon A(X) \to A$ as in Proposition \ref{prop2}

(ii) There exists on $A(X)$ a $\Lambda$-grading  where each element of $M(X)$ is homogeneous.

(iii)   $\eta \colon A(X) \to A$ is a $\Lambda$-graded homomorphism of $\Mu$-algebras over $R.$ 
\end{lem}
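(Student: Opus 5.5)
The plan is to choose the generating set $X$ to consist of homogeneous elements of $A$. Then define the grading on the free magma $M(X)$ recursively so that $\eta$ automatically respects degrees. Since $A$ is finitely generated as an $\Mu$-algebra, fix a finite generating set $\{a^1,\dots,a^r\}$. Decompose each $a^i$ into homogeneous components $a^i=\sum_\lambda a^i_\lambda$; only finitely many are nonzero. The finite set of all nonzero components $a^i_\lambda$ still generates $A$, because the subalgebra they generate contains every $a^i$.

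For $X$ we then take a finite set of formal symbols $x^1,\dots,x^N$, together with a map $X\to A$ sending the $x^j$ to these homogeneous generators. One small point should be addressed to match the setting of Proposition \ref{prop2}, where $A$ is required to contain a copy of $F=\bigoplus Rx$ generating $A$. For part (i) of the lemma, however, we only need the map $\eta$, and this exists by the universal property of $A(X)$ applied to $X\to A$. It is surjective because its image is a subalgebra containing a generating set. If one insists on $F\subset A$, the homogeneous generators could alternatively be chosen to include the basis elements of finitely many free graded pieces $A_\lambda$. I would remark on this but not rely on it.

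For (ii), assign to each $x\in X$ the degree $\deg x=\lambda$ where $\eta(x)\in A_\lambda$. Extend this to $M(X)$ by recursion on $k$: for $a=(b,m,c)\in M^j(X)_mM^{k-j}(X)$, set $\deg a=d(\deg b,m,\deg c)$. This is well defined because the decomposition of $a$ is unique. Then put $A(X)_\lambda=\bigoplus_{\deg a=\lambda}Ra$. Since $M(X)$ is a basis, $A(X)=\bigoplus_\lambda A(X)_\lambda$. By bilinearity, $A(X)_\alpha{}_m A(X)_\beta\subset A(X)_{d(\alpha,m,\beta)}$, so this is a $\Lambda$-grading in the sense above, and each element of $M(X)$ is homogeneous.

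For (iii), I show by induction on $k$ that $\eta(a)\in A_{\deg a}$ for all $a\in M^k(X)$. The case $k=1$ holds by the choice of degrees. For the inductive step, $\eta(b_mc)=\eta(b)_m\eta(c)\in A_{\deg b}{}_mA_{\deg c}\subset A_{d(\deg b,m,\deg c)}$. By linearity, $\eta(A(X)_\lambda)\subset A_\lambda$, so $\eta$ is graded.

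The main subtlety is the step where the definition of $d$ is used: the paper's notion of grading presumes that $a_mb$ lands in a single $A_{d(\alpha,m,\beta)}$ even when the product is zero. Because $d$ is given as a function on all of $\Lambda\times\Mu\times\Lambda$, this causes no ambiguity. The freeness hypothesis on the $A_\lambda$ is not needed for (i)--(iii) themselves; it only matters later.
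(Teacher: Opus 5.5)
Your grading construction and your verification of (ii) and (iii) match the paper's proof. You define the degree recursively through $d$, and it is well defined because decomposition in $M(X)$ is unique. Your induction for (iii) spells out what the paper calls ``by construction''. Your passage from a finite generating set to its homogeneous components is the step the paper uses tacitly to find $\lambda_1,\dots,\lambda_n$.

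\textbf{The gap is in (i).} ``As in Proposition \ref{prop2}'' means that $A$ contains a copy of $F=\bigoplus_{x\in X}Rx$, i.e.\ $F\to A$ is injective. Your default $X$, the nonzero homogeneous components of arbitrary generators taken as formal symbols, need not satisfy this. For example, two components might be $v$ and $2v$, so that $2x^1-x^2\mapsto 0$. Or, over $R=\mathbb Z/6\mathbb Z$, a single component $2v$ with $v$ a basis vector of $A_\lambda$ is killed by $3$. In either case you get a surjective graded $\eta$, but Proposition \ref{prop2}(i),(iii) and everything downstream are unavailable.

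The repair you mention only in passing should be the main construction, and it must be stated precisely. Let $\lambda_1,\dots,\lambda_n$ be the degrees of your homogeneous components. Put $F=A_{\lambda_1}\oplus\cdots\oplus A_{\lambda_n}$; it generates $A$ because it contains all the components. Take $X$ to be \emph{exactly} a homogeneous basis of $F$, not a set that merely includes such bases together with other elements. This is where the hypotheses are used: freeness gives $F\cong\bigoplus_{x\in X}Rx$ inside $A$, and finite rank makes $X$ finite.

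This choice also makes $F$ a direct summand of $A$ that every graded automorphism preserves. Theorem \ref{repgr}(b) relies on exactly this, both for hypothesis (b) of Theorem \ref{autrep} and for $\BAut_{\rm gr}(A)\subset\BAut(A,F)$. So your remark that freeness is not needed holds only for the weaker statement that $A$ is a graded quotient of a finitely generated free $\Mu$-algebra. With the corrected $X$, the rest of your argument goes through unchanged and coincides with the paper's proof.
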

\begin{proof} Choose $\lambda_1, \dots, \lambda_n$ so that $F = A_{\lambda_1} \oplus \cdots \oplus A_{\lambda_n}$ generates $A.$  Let $X = \{x^1, \dots, x^N\}$ be a basis of $F$ consisting of homogeneous elements. Then $F$ is a direct summand of $A$ and we can define the map $\eta \colon A(X) \to A$ that is the identity of $F.$ 

Next we attach to every element $a$ of  the $\Mu$-magma $M(X)$ a degree ${\rm deg}(a)$ in $\Lambda. $ We do this by induction on the length of $a.$ If $a \in M^1(X)$, then $a = x^i \in X.$  By assumption $x^i \in A_{\lambda_j}$ for 
some $1 \leq j \leq n$ and we set ${\rm deg}(a) = \lambda_j.$  Assume $\ell > 1$ and that we have defined the degree for elements of length less than $\ell.$ If $a \in M^\ell(X),$ then $a = b_mc$ for some unique elements $b \in M^i(X), c \in M^j(X)$ and $m \in \Mu$ where $1 \leq i,j < \ell$ and $i + j = \ell.$ If ${\rm deg}(b) =\beta$ and ${\rm deg}(c) =\gamma,$ we set ${\rm deg}(a) = d(\beta,m,\gamma).$ Since the elements of $M(X)$ are an $R$-basis of $A(X),$ we have our desired $\Lambda$-grading on the module $A(X).$ By construction  if $x \in A(X)_\beta,$ $y \in A(X)_\gamma$ and $m \in \Mu$, then $x_my \in A(X)_\lambda$ where $\lambda = d(\beta, m, \gamma).$ The map $\eta$ is thus a $\Lambda$-graded homomorphism of $\Mu$-algebras over $R.$
\end{proof}

The automorphisms of $A$ by definition  preserve the grading; i.e., they are graded automorphisms. We denote this group by $\Aut_{\rm gr}(A).$  It is clear that for any $S$ in $\Ralg$ the $\Mu$-algebra $A \otimes_R S$ over $S$ has a natural $\Lambda$-grading, and that this construction is functorial on $S.$ This leads to the $R$-group functor $\BAut_{\rm gr}(A)$ that is easily shown to be a subgroup sheaf of $\BAut(A)$ by the argument of Proposition \ref{sheaf} ({\it cf.}\@ \cite{MGP}). 

It is not natural to expect representability in general without some kind of finiteness assumption. The following result shows that in the graded case being finitely generated suffices.\footnote{\,Fortunately, being finitely generated is a property shared by a large class of important vertex algebras, for example $C_2$-cofinite vertex algebras.} We illustrate the general reasoning in the case of vertex algebras, and comment on the general case at the end.  Vertex algebras over rings\footnote{\, The concept goes back to Borcherds. The earliest explicit version that we have found is \cite{Griess}, and this is the definition we follow.}
 fall under our situation if we take both $\Lambda$ and $\Mu$ to be $\Bbb{Z}.$

 \begin{thm}\label{repgr} Let $V = \bigoplus_{n \in \Bbb{Z}} V_n$ be a  finitely generated vertex algebra over a ring $R.$ Assume  that each of  the $R$-modules  $V_n$ is of finite type, and that we are in one of the following two situations:

(a) $R$ is noetherian and $V$ is locally free as an $R$-module.

(b) The $V_n$ are free.  

\noindent Then  $\BAut_{\rm gr}(V)$ is an affine $R$-group of finite type which is  locally isomorphic to a closed subgroup of $\BGL_N$ for some $N.$ 
	\end{thm}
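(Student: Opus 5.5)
The plan is to reduce to Theorem \ref{autrep}, using the grading to pick a finite generating set $X$ with $F$ a graded summand. First I need to see that graded automorphisms preserve $F$ and are determined by their restriction to $F$.

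Since $V$ is finitely generated, finitely many homogeneous pieces generate it. Choose $\lambda_1,\dots,\lambda_n$ so that $F=V_{\lambda_1}\oplus\cdots\oplus V_{\lambda_n}$ generates $V$.

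\textbf{Case (b).} Each $V_n$ is free, so Lemma \ref{graded} gives a finite homogeneous basis $X=\{x^1,\dots,x^N\}$ of $F$. It also gives a graded surjection $\eta\colon A(X)\to V$. Every $\theta\in\BAut_{\rm gr}(V)(S)$ preserves each $V_{\lambda}\otimes_R S$, hence preserves $F\otimes_R S$. Therefore $\BAut_{\rm gr}(V)$ is a subfunctor of $\BAut(V,F)$, and via Lemma \ref{lem4}\ref{l4ii} it embeds in $\BGL_{N,R}$. Since $F$ is a direct summand, the injectivity hypotheses hold.

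$R$ need not be noetherian here, so I cannot invoke Theorem \ref{autrep} directly. Instead I use the grading to avoid needing $I$ locally free. By Lemma \ref{graded}, $\ker\eta=I=\bigoplus_\lambda I_\lambda$ with $I_\lambda=\ker(\eta_\lambda\colon A(X)_\lambda\to V_\lambda)$. Each $A(X)_\lambda$ is free, with basis the words in $M(X)$ of degree $\lambda$. Each $V_\lambda$ is free, so $\eta_\lambda$ splits and $I_\lambda$ is projective.

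The condition $\widetilde\theta(I\otimes S)\subset I\otimes S$ is equivalent to $\eta_S\circ\widetilde\theta$ vanishing on $I\otimes S$. This equals the vanishing of $\eta\widetilde\theta(w)$ for $w$ running over a generating set of $I$, and a generating set suffices; no basis is needed. By the inductive computation in the proof of Theorem \ref{autrep}, $\widetilde\theta(w)=\sum_b q_{b,w}(\theta)\,b$ with polynomial coefficients. Writing $\eta(b)$ in fixed bases of the free modules $V_\lambda$, the vanishing becomes a family of polynomial equations in the $\theta_{ij}$. Adding the same equations for $\theta^{-1}$ cuts out $\BAut_{\rm gr}(V)$ as a closed subgroup of $\BGL_{N,R}$.

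The graded condition adds nothing further. Any $\theta\in\GL(F)$ that preserves each $V_{\lambda_j}$ and induces an automorphism of $V$ yields a graded automorphism, because $\widetilde\theta$ preserves degrees. So I also impose the linear block conditions that $\theta$ preserves each $V_{\lambda_j}\otimes S$. Conversely, these block conditions hold for graded automorphisms.

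\textbf{Case (a).} Localize on $\Spec R$ so that the finitely many relevant $V_{\lambda_j}$ become free. Since each $V_\lambda$ is of finite type and projective, being a summand of a locally free module, only finitely many localizations are needed for the finitely many $\lambda_j$. The other pieces $V_\lambda$ need not be free after this localization. Here I instead follow Theorem \ref{autrep}: $I$ is locally free because $R$ is noetherian, and the sheaf property (Proposition \ref{sheaf}, adapted to $\BAut_{\rm gr}$) allows gluing. Alternatively, I rerun the generating-set argument above. It only needs $V\otimes S$ to embed equations properly, and I can write $\eta(b)=0$ conditions as coordinate equations via the injection $V\hookrightarrow V\otimes R_i$ of free localizations.

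Finite type follows because $\BGL_N$ is of finite type and the subgroup is closed.

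\textbf{Main obstacle.} The hardest step is making the equations ``$\eta\widetilde\theta(I)=0$'' polynomial when the $V_\lambda$ are not globally free. I would handle this by expanding in local bases of the $V_\lambda$. Only finitely many words contribute to each $\eta(b)$ expansion, but infinitely many $\lambda$ occur, which is fine since closed conditions may be infinite in number. The remaining care is needed in case (a), where finitely many localizations must suffice uniformly over all $\lambda$. The noetherian hypothesis is exactly what gives this, via the local freeness of $I$ as in Theorem \ref{autrep}.
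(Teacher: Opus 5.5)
Your proof is correct. In case (a) it is the paper's argument: localize until the $V_{\lambda_j}$ are free, invoke Theorem \ref{autrep}, then impose the block-diagonal conditions. In case (b), however, you take a genuinely different and more economical route.

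\textbf{The paper's route in case (b).} It shows that $I$ is free. Each $I_n$ is stably free, and it is not finitely generated because $A(X)_n$ has infinite rank; this uses that $m$ ranges over all of $\mathbb Z$ and that $(V_i)_m(V_j)\subset V_{i+j-m-1}$. Gabel's theorem then makes $I_n$ free, and the paper reruns the basis-dependent argument of Theorem \ref{autrep}.

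\textbf{Your route in case (b).} You observe that only the conditions $\eta_S\widetilde\theta(w\otimes 1)=0$ matter, that is, the $A'$-block of $\widetilde\theta$ on $I$. Moreover, $w$ may run over a mere generating set of $I$, because $\ker\eta_S$ is the image of $I\otimes_R S$. Right exactness of $\otimes_R S$ already gives this, so you do not even need the splitting you invoke. Coordinates are then taken in a basis of $V$ rather than of $I$.

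\textbf{What this buys.} You avoid Gabel's theorem, the infinite-rank count, and any use of the specific degree function of vertex algebras. Your argument therefore settles the general $\Lambda$-graded $\Mu$-algebra case that the paper's final remark leaves conditional. The same argument shows that Theorem \ref{autrep} holds over an arbitrary $R$ whenever $A$ is locally free: localize until $A$ is free, cut out $\BAut(A,F)$ by your equations, and glue with Proposition \ref{sheaf}. So neither the noetherian hypothesis nor Bass's theorem is needed. The paper's basis approach gives the full matrix $(p_{\ell h})$ of $\widetilde\theta$ with respect to $I\oplus A'$, but only the block you isolate is ever used.

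\textbf{Your ``main obstacle'' paragraph misdiagnoses case (a).} The finite cover comes from the local freeness of $V$ itself, not from noetherianity. Your generating-set argument, run after localizing until $V$ and the $V_{\lambda_j}$ are all free, proves (a) for arbitrary $R$. This is not a gap, since your primary route in (a) goes through Theorem \ref{autrep} and is valid as written. But the vague alternative about embedding $V$ into $V\otimes R_i$ should be replaced by exactly this localization.
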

	
\begin{proof} Case (a): Since $V$ is locally free, the $V_\lambda$ are locally projective, hence projective. Choose a finite number of these, say $V_{n_1}, \dots , V_{n_\ell},$ that generate $V,$ and let $F = \bigoplus V_{n_i}$ be their sum. This is a projective $R$-module of finite type, hence locally free. By further localizing $R$ we may assume that each of the $V_{n_i}$, hence also $F,$ are free. Note that $F$ is a direct summand of $V$. Let $N$ be its rank. 
 
By Theorem \ref{autrep} we may assume that  $\BAut(V,F)$ is  a closed subgroup of $\BGL_{N,R}$. 
Since any degree preserving automorphism automatically preserves $F$, we find that $\BAut_{\rm gr}(V)$ is a subgroup sheaf of $\BAut(V,F)$. We claim that $\BAut_{\rm gr}(V)$ is in fact a closed subscheme of  $\BAut(V,F).$
Since $F$ generates $V,$ an element of $\BAut(V,F)$ is in $\BAut_{\rm gr}(V)$ if and only if it preserves each of the $V_{n_i}.$ But this is clearly a polynomial condition on the coordinate ring of $\BGL_{N,R}$, hence on that of $\BAut(V,F).$ 
 Since we want the vacuum vector $\mathbf 1$ of $V$ to be preserved, we may have to add an additional polynomial condition.  The same conclusion holds if one restricts to automorphisms fixing any number of specific vectors (for example, automorphisms of vertex operator algebras are often required to fix the conformal vector $\omega$). The same conclusion also holds for  many variants of vertex algebras, such as vertex superalgebras.

	Case (b): Let us write $V = A$ to be consistent with previous notation. By Lemma \ref{graded} we have a $\Bbb{Z}$-grading on $A(X)$ such that the map $\eta \colon A(X) \to A$ is graded. It follows that the kernel $I$ of $\eta$ is a graded ideal $I = \bigoplus_{n \in \Bbb{Z}}I_n$ where $I_n = I \cap A(X)_n.$ Because by definition $(V_i) _{m} (V_j) \subset V_{i + j -m - 1},$ we see that $A(X)_n$ is a free $R$-module of infinite rank for all $n$ (it contains elements of $M(X)$ of arbitrarily large length).  

Consider the  exact sequence of $R$-modules ({\it cf.} Proposition \ref{prop2})
\[ \begin{tikzcd} 0 \ar[r] & I \ar[r, "\subset"] & A(X) \ar[r, "\eta"] & A \ar[r] & 0 \end{tikzcd} \]
as in the proof of Theorem \ref{autrep}. Since $\eta$ is graded, we may chose our section $\sigma$ of $\eta$ so that $A'_n = \sigma(A_n) \subset A(X)_n.$ This yields the split exact sequence of $R$-modules 
\[ \begin{tikzcd} 0 \ar[r] & I_n \ar[r, "\subset"] & I_n \oplus A'_n = A(X)_n \ar[r, "\eta"] & A_n \ar[r] & 0 \end{tikzcd} \]

Since $A(X)_n$ is free and $A'_n$ is free of finite rank, $I_n$ is stably free. But then $I_n$ is free by a theorem of Gabel \cite[Ch. I Prop. 4.2]{Lam} (recall $I_n$ is not finitely generated). It follows that $I$ is free. The proof of Theorem \ref{autrep} shows that $\BAut(A,F)$ is a closed subgroup of $\BGL_{N,R}.$  As explained in Case (a), $\BAut_{\rm gr}(A)$ is a closed subgroup of $\BAut(A,F).$  \end{proof}

The following result was proved by Dong and Griess using  delicate considerations involving the commutator and iterate formulas  of vertex algebras \cite{DongGr}.
\begin{cor} Let $V$ be a finitely generated vertex algebra over a field $k.$ Then $\BAut_{\rm gr }(V)$ is an affine algebraic group over $k.$ \qed
\end{cor}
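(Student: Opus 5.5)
The corollary should follow directly from Theorem \ref{repgr}, once we check that a finitely generated vertex algebra over a field satisfies its hypotheses. Write $V=\bigoplus_{n\in\mathbb Z}V_n$ over $k$. Over a field every module is free, so each $V_n$ is free and $V$ is free as a $k$-module. Also $k$ is noetherian. Hence both situation (a) and situation (b) of Theorem \ref{repgr} hold, as far as freeness and the base ring are concerned.

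The one hypothesis still to check is that each $V_n$ is of finite type, i.e.\ $\dim_k V_n<\infty$. This is built into the standing convention of the introduction: graded vertex algebras are taken with finitely generated graded pieces, and the Dong--Griess setting assumes the same. With that convention in force, Theorem \ref{repgr} gives an affine $k$-group scheme $\BAut_{\rm gr}(V)$ of finite type that is locally isomorphic to a closed subgroup of $\BGL_{N,k}$.

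Over $\Spec(k)$, which is a single point, ``locally'' just means ``globally''. So $\BAut_{\rm gr}(V)$ is in fact a closed subgroup scheme of $\BGL_{N,k}$. An affine group scheme of finite type over a field is by definition an affine algebraic group over $k$, which is the assertion of the corollary.

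The only step that needs care is the finiteness of the graded pieces. If one wanted to avoid leaning on the convention, one could try to derive $\dim V_n<\infty$ from finite generation. This does not work in general: without a lower bound on the grading, or other constraints, infinitely many words in $M(X)$ can land in the same degree. So I would simply invoke the convention rather than attempt that derivation.
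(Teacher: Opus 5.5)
Your proposal is correct and follows the paper, which records the corollary without a separate argument as a direct consequence of Theorem \ref{repgr}: over a field both situations (a) and (b) hold, the finiteness $\dim_k V_n<\infty$ comes from the standing convention in the introduction, and over the one-point space $\Spec(k)$ the local conclusion is a global one. You are also right that finite generation alone does not force $\dim_k V_n<\infty$, so the result really does rest on that convention.
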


\begin{rem} Theorem \ref{repgr} can be considered for an arbitrary finitely generated $\Lambda$-graded $\Mu$-algebra $A = \bigoplus_{\lambda \in \Lambda} A_\lambda$ over a ring $R$ for which the graded components $A_\lambda$ are of finite type. Case (a) holds unchanged. If the $A_\lambda$ are free, the graded considerations apply and one is once again reduced to the need that $I$ is (locally) free. A useful situation where this is the case is when the $I_\lambda$ are not finitely generated, so one can appeal to  Gabel's theorem.  Whether this holds depends of course on the grading and on $A,$ but it is safe to say that for any $\Mu$-algebra with enough ``relations,"  like vertex algebras, this will be the case.
\end{rem}

\bibliographystyle{amsplain}
\bibliography{refs}

\small{	\noindent Mathematisches Institut, Ludwig-Maximilians-Universität München, Theresienstr.\@ 39, 80333 München, Germany; and \\ 
Department of Mathematical and Statistical Sciences, University of Alberta, Edmonton, Alberta T6G 2G1, Canada, \\
\textit{e-mail}: \textsf{rmader@ualberta.ca}

\medskip
\noindent Department of Mathematical and Statistical Sciences, University of Alberta, Edmonton, Alberta T6G 2G1, Canada, \\
\textit{e-mail}: \textsf{tjgannon@ualberta.ca}

\medskip
\noindent Department of Mathematical and Statistical Sciences, University of Alberta, Edmonton, Alberta T6G 2G1, Canada, \\
\textit{e-mail}: \textsf{a.pianzola@ualberta.ca}
}

\end{document}